\let\OLDthebibliography\thebibliography
\renewcommand\thebibliography[1]{
  \OLDthebibliography{#1}
  \setlength{\parskip}{0pt}
  \setlength{\itemsep}{1pt plus 0.2ex}
}
\newcounter{thm}
\newtheorem{theorem}[thm]{Theorem}
\theoremstyle{definition}
\newtheorem{prop}[thm]{Proposition}
\newtheorem{remark}[thm]{Remark}
\newcommand{\C}{\mathbb{C}}
\newcommand{\Sy}{\mathcal{S}}
\newcommand{\ww}{\omega}
\newcommand{\wo}{\overline{\omega}}
\DeclareMathOperator{\Aut}{Aut}
\DeclareMathOperator{\GL}{GL}
\DeclareMathOperator{\Row}{Row}
\begin{document}
 \title{\textbf{\large{Construction of the outer automorphism of $\Sy_{6}$ via a complex Hadamard matrix}}}

 \author{\textsc{Neil I. Gillespie}\thanks{Email: neil.gillespie@bristol.ac.uk} \\
 \textit{\footnotesize{Heilbronn Institute for Mathematical Research,}}\\
    \textit{\footnotesize{University of Bristol, UK}}\\
    \textsc{Padraig \'O Cath\'ain}\thanks{Email: pocathain@wpi.edu} \\
\textit{\footnotesize{Department of Mathematical Sciences,}}\\
\textit{\footnotesize{Worcester Polytechnic Institute, MA, USA}}\\
    \textsc{Cheryl E. Praeger}\thanks{Email: cheryl.praeger@uwa.edu.au}\\
    \textit{\footnotesize{School of Mathematics and Statistics,}}\\
    \textit{\footnotesize{University of Western Australia, Australia,}}
}

 \maketitle

 \begin{center}
 \begin{abstract}
     We give a new construction of the outer automorphism of the symmetric group on six points. Our construction features a complex Hadamard matrix of order six containing third roots of unity and the algebra of split quaternions over the real numbers.
 \end{abstract}
 \end{center}

\section{Introduction}

Sylvester showed that the fifteen two-subsets of a six element set can be formed into 5 parallel classes in six different ways and that the action of $\Sy_{6}$ on these \textit{synthematic totals} is essentially different from its natural action on six points, \cite{Sylvester6}. To our knowledge this was the first construction for the outer automorphism of $\Sy_{6}$.

Miller attributes the result that for $n \neq 6$, $\Sy_{n}$ has no outer automorphisms to H\"{o}lder, and Sylvester's construction of the outer automorphism of $\Sy_{6}$ to Burnside, \cite{MillerOuter}. He also gives a by-hand construction of the outer automorphism. The papers of Janusz and Rotman, and of Ward provide easily readable accounts which are similar to Sylvester's, \cite{JanuszOuter,WardOuter}. Cameron and van Lint devoted an entire chapter (their sixth!) to the outer automorphism of $\Sy_{6}$, \cite{CameronVanLint}. They build on Sylvester's construction to construct the $5$-$(12, 6, 1)$ Witt design, the projective plane of order $4$, and the Hoffman-Singleton graph.

Via consideration of the cube in $\mathbb{R}^{3}$, Fournelle gives a heuristic for the existence of an outer automorphism of $\Sy_{6}$, and constructs it with the aid of a computer, \cite{FournelleOuter}. Howard, Millson, Snowden and Vakil give two constructions of the outer automorphism of $\Sy_{6}$, and use this to describe the invariant theory of six points in certain projective spaces, \cite{HowardOuter}.

In this note we give a construction which we believe has not previously been described, using a complex Hadamard matrix of order $6$ and a representation of the triple cover of $A_6$ over the complex numbers. This note is inspired by a construction of Marshall Hall Jr \cite{HallM12} for the outer automorphism of $M_{12}$ via a real Hadamard matrix of order $12$, and by Moorhouse's classification of the complex Hadamard matrices with doubly transitive automorphism groups, \cite{Moorhouse}. It was in the latter paper that we first became aware of the complex Hadamard matrix of order $6$ discussed in this article, where it is described as corresponding to the distance transitive triple cover of the complete bipartite graph $K_{6,6}$.

\section{Hadamard matrices}

Let $\ww$ be a primitive complex third root of unity. Then the matrix $H_6$ is \textit{complex Hadamard}.
\[
H_{6} = \left( \begin{array}{llllll}
1 & 1 & 1 & 1 & 1 & 1\\
1 & 1 & \ww & \wo & \wo & \ww \\
1 & \ww & 1 & \ww & \wo & \wo \\
1 & \wo & \ww & 1 & \ww & \wo \\
1 & \wo & \wo & \ww & 1 & \ww \\
1 & \ww & \wo & \wo & \ww & 1
\end{array}
\right) \]
This means that $H_{6}$ satisfies the identity $H_{6} H_{6}^{\dagger} = 6I_{6}$, where for an invertible complex matrix $A$, $A^{\dagger}$ is the complex conjugate transpose of $A$.
Equivalently, $H_{6}$ reaches equality in Hadamard's determinant bound. We refer the reader to \cite{deLauneyFlannery} for a comprehensive discussion of Hadamard matrices and their generalisations.

An \textit{automorphism} of a complex Hadamard matrix is a pair of monomial matrices $(P, Q)$ such that $P^{-1}HQ = H$. The set of all automorphisms of $H$ forms a group under composition. In this note we will work with the subgroup of automorphisms $(P, Q)$ where all non-zero entries are third roots of unity, we denote this group $\Aut(H)$. Consider now the projection maps $\rho_{1}(P, Q) \mapsto P$ and $\rho_{2}(P,Q) \mapsto Q$. Since $\frac{1}{\sqrt{6}}H_{6}$ is unitary, and
for any automorphism $(P, Q)$ of $H$ the identity $HQH^{-1} = P$ holds, it follows that $\rho_{1}$
and $\rho_{2}$ are conjugate representations of $\Aut(H)$. Note further that $\rho_{i}$ is a faithful
representation, since $Q = I$ forces $P = I$. Thus $\Aut(H)$ is isomorphic to a finite subgroup of monomial matrices of $\GL_{n}(\mathbb{C})$. Furthermore, if $\Aut(H)$ contains a subgroup isomorphic to $G$, then the projections $\rho_{1}$ and $\rho_{2}$ onto the first and second components of $\Aut(H)$ give two conjugate representations of $G$ by monomial matrices.

Every monomial matrix has a unique factorisation $P = DK$ where $D$ is diagonal and $K$ is a permutation matrix. The projection $\pi: P \mapsto K$ is a homomorphism for any group of monomial matrices. In general, the representation $\Aut(H)^{\rho_{1}\pi}$ is \textbf{not} linearly equivalent to the representation $\Aut(H)^{\rho_{2}\pi}$. As mentioned above, this phenomenon was first observed by Hall, who showed that the automorphism group of a Hadamard matrix of order $12$ is isomorphic to $2.M_{12}$, and that $\rho_{1}\pi$ and $\rho_{2}\pi$ realise the two inequivalent actions of $M_{12}$ on $12$ points, \cite{HallM12}. This interpretation of the outer automorphism of $M_{12}$ was also used by Elkies, Conway and Martin in their analysis of the Mathieu groupoid $M_{13}$, \cite{ConwayElkies}.

Throughout this note we use the following shorthand for monomial matrices: we list the elements of the diagonal matrix $D$, and give the cycle notation for $K$ as a permutation of the \textbf{columns} of the identity matrix (i.e. a right action).

Consider the following pairs of monomial matrices.
\begin{eqnarray*}
 \tau_1 &:=& \left([1,1,1,1,1,1] (2, 3, 4, 5, 6), \quad [1,1,1,1,1,1] (2,3,4,5,6) \right) \\
 \tau_2 &:=& \left( [1,1,\ww,\wo,\wo,\ww] (1,2) ,\quad\quad\;\; \left[1,1,\wo,\ww,\ww,\wo \right](1,2)(3,6)(4,5) \right).
\end{eqnarray*}

We define $\ast$ to be the entry-wise complex conjugation map, and consider the group $X = \langle \tau_{1}, \tau_{2}, \ast \rangle$.

\begin{prop}\label{prop1}
The group $X$ is of the form $3^{10}. \Sy_{6} . 2$.
\end{prop}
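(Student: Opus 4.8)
The plan is to realise $X$ as an iterated extension matching the three displayed factors, working from the top down: first the order-two symmetry $\ast$, then the permutation parts of the generators, and finally their diagonal parts. Set $Y = \langle \tau_1, \tau_2, \tau_2^{\ast}\rangle$, the subgroup of honest pairs of monomial matrices inside $X$, where $\tau_2^{\ast}$ is the entrywise conjugate of $\tau_2$ (and $\tau_1^{\ast} = \tau_1$). First I would record that $\ast$ has order two, conjugates each generator of $Y$ to an element of $Y$, and is itself not a pair of monomial matrices; hence $Y \trianglelefteq X$ with $X/Y \cong C_2$, giving $X = Y.2$ and reducing the claim to $Y \cong 3^{10}.\Sy_6$.

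For the factor $\Sy_6$, consider the permutation-part maps $\pi\rho_1$ and $\pi\rho_2$ on $Y$. On the generators these read $\tau_1 \mapsto (2,3,4,5,6)$, $\tau_2 \mapsto (1,2)$ under $\pi\rho_1$, and $\tau_1 \mapsto (2,3,4,5,6)$, $\tau_2 \mapsto (1,2)(3,6)(4,5)$ under $\pi\rho_2$. I would first check that $(2,3,4,5,6)$ and $(1,2)$ generate $\Sy_6$, and then that the assignment $\phi\colon (2,3,4,5,6)\mapsto (2,3,4,5,6)$, $(1,2)\mapsto (1,2)(3,6)(4,5)$ respects a set of defining relations for $\Sy_6$; it is then an automorphism, and an outer one since it alters the cycle type of a transposition. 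Consequently the combined map $(\pi\rho_1,\pi\rho_2)$ has image the graph $\{(g,\phi(g)) : g \in \Sy_6\}\cong\Sy_6$, both projections share a common kernel $N$ — the elements of $Y$ that are diagonal on both sides — and $Y/N\cong\Sy_6$.

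It remains to identify $N$, which is the crux. Encoding a diagonal matrix with entries in $\{1,\ww,\wo\}$ as a vector in $\F_3^{6}$ via $\ww\mapsto 1$, $\wo\mapsto -1$, the group $N$ becomes an $\F_3[\Sy_6]$-submodule of $U_1\oplus U_2$, where $\Sy_6$ acts on $U_1 = \F_3^{6}$ by the natural permutation of coordinates and on $U_2 = \F_3^{6}$ through $\phi$. One computes that $\tau_2^{2}$ is diagonal, with associated vector $w = (w_1,w_2)$, and that $N$ is generated as a module by $w$ (verifying that the defining relations of $\Sy_6$ in these generators lift to diagonal elements already lying in $\F_3[\Sy_6]\,w$). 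Both $w_1$ and $w_2$ are sum-zero, so $N\subseteq W_1\oplus W_2$, where $W_i\subset U_i$ is the $5$-dimensional sum-zero submodule. Since $6\equiv 0 \pmod 3$, the all-ones vector $\mathbf{1}$ lies in $W_i$, and $W_i$ is uniserial with submodule chain $0\subset\langle\mathbf{1}\rangle\subset W_i$ and $4$-dimensional irreducible head $V_i = W_i/\langle\mathbf{1}\rangle$.

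Because $w_i$ is sum-zero but nonconstant, it generates all of $W_i$, so $N$ projects onto both $W_1$ and $W_2$; Goursat's lemma together with the three-element submodule lattice of each $W_i$ then forces $\dim_{\F_3} N\in\{5,6,10\}$, with the values $5$ and $6$ requiring $W_1\cong W_2$, respectively $V_1\cong V_2$, as $\F_3[\Sy_6]$-modules. Excluding these is the main obstacle, and it is exactly here that the outer automorphism does the work: $U_2$ is the $\phi$-twist of $U_1$, and the two permutation modules have distinct Brauer characters, because a transposition fixes four points in the natural action while its image $(1,2)(3,6)(4,5)$ fixes none and $4\neq 0$. As $U_1$ and $U_2$ have the same trivial composition factors, this forces $V_1\not\cong V_2$, hence also $W_1\not\cong W_2$; therefore $\dim_{\F_3} N = 10$ and $N\cong 3^{10}$. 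Combining the three layers gives $X\cong 3^{10}.\Sy_6.2$.
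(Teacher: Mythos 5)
Your proposal is correct, and while its skeleton matches the paper's --- the index-$2$ quotient by $\ast$, the $\Sy_6$ quotient read off from the permutation parts, and the diagonal kernel $N$ (your $Y$ is the paper's $X_0$, since $\tau_2^{\ast}=\tau_2^{-1}$) --- you settle the crux, namely $|N|=3^{10}$, by a genuinely different argument. Your verification that $\phi$ respects a presentation of $\Sy_6$ is essentially the same finite check the paper performs with $s=\tau_1\tau_2'$, $t=\tau_2'$, just rephrased as saying the image of $(\pi\rho_1,\pi\rho_2)$ is the graph of an automorphism; note you should also observe that $\phi$ is injective (its image contains a $5$-cycle and an odd permutation, so the kernel is neither $\Alt_6$ nor $\Sy_6$), which is immediate. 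For $N$ itself, the paper computes by hand: it conjugates the seed $[\tau_2,\ast]=\tau_2^{-2}$ (the inverse of your $w$) by powers of $\tau_1$ and exhibits the explicit product $n_3n_4^2n_5^2$ together with its $\tau_2'$-conjugate, whose ratio is a non-constant element of $N$ with trivial first component; combined with surjectivity of the projections this forces $N=M\times M$ directly. You instead argue structurally: with $N\subseteq W_1\oplus W_2$ and both projections onto, Goursat's lemma and the three-term submodule lattice of each $W_i$ leave only dimensions $5$, $6$, $10$, and the two graph-like cases are excluded because the $\phi$-twisted permutation module is not isomorphic to the natural one (Brauer character $4$ versus $0$ at a transposition). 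This is more conceptual, and it makes visible why the outer automorphism forces $N$ to be as large as possible --- the paper's explicit elements are doing exactly the work of your non-isomorphism $V_1\not\cong V_2$. Two small economies are available: your claim that $N=\F_3[\Sy_6]\,w$ is stronger than needed and its verification via relator lifts is a deferred computation; for Goursat you only need $w\in N$ and the containment $N\subseteq W_1\oplus W_2$, and the latter follows cheaply from the paper's determinant observation (the generators have determinants in $\{\pm1\}$, while diagonal matrices with entries in $\langle\ww\rangle$ have determinant in $\langle\ww\rangle$, hence determinant $1$), which would let you drop the relator-lifting step entirely. Finally, both you and the paper invoke, without proof, the same standard fact that the only proper non-trivial $\F_3\Sy_6$-submodule of the sum-zero module is the constant one, so on that point you are on equal footing.
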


\begin{proof}
Since $\tau_{1}^{\ast} = \tau_{1}$ and $\tau_{2}^{\ast} = \tau_{2}^{-1}$, we have that $X_{0} = \langle \tau_{1}, \tau_{2}\rangle$ is normal in $X$. Hence $X = X_{0} \rtimes \langle \ast \rangle$, with $X_{0}$ of index $2$ in $X$.

The commutator $[ \tau_{2}, \ast ] = \left( [1,1,\ww,\wo,\wo,\ww] , \left[1,1,\wo,\ww,\ww,\wo \right] \right)$ consists of diagonal matrices; furthermore
$$
\tau_{2}' := [ \tau_{2}, \ast ]^{-1}\tau_{2}=((1,2),(1,2)(3,6)(4,5)),
$$
a pair of permutation matrices. Recall that $\langle s, t \mid s^{6} = t^{2} = (st)^{5} = [ t, s^{2}]^{2} = [ t, s^{3}]^{2} = 1\rangle$ is a presentation for $\Sy_{6}$ (see \cite{BealsEtAl}, for example). A computation with $t = \tau_{2}'$ and
$$
s = \tau_{1}\tau_{2}'=((1,2,3,4,5,6),(1,2,6)(3,5))
$$
shows that all the relations in this presentation hold for these elements $s, t$, and hence
$Y= \langle \tau_{1}, \tau_{2}'\rangle$ is isomorphic to a quotient of $\Sy_{6}$.
On the other hand, $Y^{\rho_{1}\pi}$ is easily seen to be isomorphic to $\Sy_{6}$, so we conclude that $Y \cong \Sy_{6}$.
Now let $N$ be the subgroup of $X$ consisting of all elements for which each component is a diagonal matrix.  Since $\tau_{1}^{\rho_{i}}$ and $\tau_{2}^{\rho_{i}}$ have determinants in $\{\pm 1\}$, every element of the projection $X_{0}^{\rho_{i}}$ has this property.
However all the elements of $N^{\rho_{i}}$ have third roots of unity along the diagonal, and so must have determinant $1$. As a result, $X_{0}^{\rho_{i}}$ is isomorphic to a subgroup of $M\rtimes \Sy_{6}$ where $M \cong 3^5$ is the group of unimodular diagonal matrices with entries from $\langle \omega \rangle$, and $\Sy_{6}$ acts as $Y^{\rho_{i}\pi}$. The only non-trivial $\Sy_{6}$-submodule of $M$ is the constant module of order $3$.

Define $n_{i+1} := [ \tau_{2}, \ast ]^{\tau_{1}^{i}}$ for each $i\geq1$. (We shift subscripts because the action of $\tau_{1}$ on $[ \tau_{2}, \ast ]$ gives elements of $N$ which have the non-initial rows of $H_{6}$ as the diagonal of the first component.) Since $[ \tau_{2}, \ast ] \in X_{0}$, we have $n_{i} \in X_{0}$ for $2 \leq i \leq 6$. Observe that
\begin{eqnarray*}
 n_{3}n_{4}^{2}n_{5}^{2} &=& \left( [1,1,1,1,\ww,\wo] ,\quad \left[1,1,1,1,\wo,\ww \right] \right)\\
 (n_{3}n_{4}^{2}n_{5}^{2})^{\tau_{2}'} &=&\left( [1,1,1,1,\ww,\wo] ,\quad \left[1,1,\ww,\wo,1,1 \right] \right).
\end{eqnarray*}
So neither of the projections $N^{\rho_{1}}$, $N^{\rho_{2}}$ are onto the constant module, and the kernel of $N^{\rho_{1}}$ is neither trivial nor the constant module. It follows that $N \cong M \times M$. Finally, we observe that monomial matrices normalise diagonal matrices, and that $X_{0}$ acts as a group of monomial matrices in each component.  It follows that $N \triangleleft X_{0}$, and that $Y$ is a complement of $N$ in $X_{0}$. Since $\ast$ acts on $N$ by inversion, $N \triangleleft X$.
\end{proof}

The group $X$ has a natural action on $6 \times 6$ matrices over $\mathbb{C}$ where $(P, Q) \in X_{0}$ acts as $H^{(P, Q)} = P^{-1}HQ$, and $\ast$ acts by complex conjugation. We compute the stabiliser of $H_{6}$ under this action. We denote this group $\Aut^{\ast}(H_{6})$ to emphasise that this is a group of semi-linear transformations in its action on the normal subgroup $N$. We require the subgroups $X_{0}$, $Y$ and $N$ defined in Proposition \ref{prop1} in the proof of the following.

\begin{prop}
The group $\Aut^{\ast}(H_{6})$ is isomorphic to the nonsplit extension $3.\Sy_{6}$, and $\Aut^{\ast}(H_{6})$ contains a $\mathbb{C}$-linear subgroup isomorphic to $3.A_{6}$.
\end{prop}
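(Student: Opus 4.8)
The plan is to realise $\Aut^{\ast}(H_{6})$ as the stabiliser $\mathrm{Stab}_{X}(H_{6})$ and to determine it through a single homomorphism together with an order count, rather than by listing its elements. Define $\Phi\colon X\to\Sy_{6}$ by sending an element to the permutation underlying the \emph{first} component of its monomial part, i.e. $(P,Q)\ast^{\varepsilon}\mapsto\pi(P)$. Since $\ast$ acts by complex conjugation it fixes the support of every monomial matrix, so $\Phi$ is a well-defined homomorphism; using that $\rho_{1}\pi$ is faithful on $Y$ one checks $\ker\Phi=N\langle\ast\rangle$, and $\Phi$ restricts to the isomorphism $Y^{\rho_{1}\pi}\cong\Sy_{6}$ of Proposition \ref{prop1}. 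I will compute $\Phi(\Aut^{\ast}(H_{6}))$ and $\Aut^{\ast}(H_{6})\cap\ker\Phi$ and combine them.

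For the kernel intersection, note first that $(D_{1},D_{2})\in N$ fixes $H_{6}$ iff $(D_{1})_{ii}^{-1}(D_{2})_{jj}=1$ for all $i,j$ (every entry of $H_{6}$ is nonzero), which forces $D_{1}=D_{2}=cI$ with $c\in\langle\ww\rangle$; hence $\Aut^{\ast}(H_{6})\cap N=Z$, where $Z:=\langle(\ww I,\ww I)\rangle\cong C_{3}$. Next, an element $(D_{1},D_{2})\ast$ fixes $H_{6}$ iff $D_{1}^{-1}\overline{H_{6}}D_{2}=H_{6}$, i.e. $(D_{1})_{ii}^{-1}(D_{2})_{jj}=H_{ij}^{2}$ for all $i,j$; this would force the matrix $(H_{ij}^{2})=\overline{H_{6}}$ to have rank one, which is false. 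Thus $\Aut^{\ast}(H_{6})\cap\ker\Phi=Z$, a normal subgroup of order $3$.

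For the image I would exhibit two automorphisms. The pair $\tau_{1}$ is a genuine ($\mathbb{C}$-linear) automorphism, since rows and columns $2,\dots,6$ of $H_{6}$ form a circulant fixed by the simultaneous cycle $(2,3,4,5,6)$, and $\Phi(\tau_{1})=(2,3,4,5,6)$. The element $\tau_{2}\ast$ is a semilinear involution, since $(\tau_{2}\ast)^{2}=\tau_{2}\tau_{2}^{\ast}=\tau_{2}\tau_{2}^{-1}=1$ by Proposition \ref{prop1}; a direct computation verifies $\tau_{2}\ast$ fixes $H_{6}$, and $\Phi(\tau_{2}\ast)=(1,2)$. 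Since $\langle(2,3,4,5,6),(1,2)\rangle=\Sy_{6}$, we obtain $\Phi(\Aut^{\ast}(H_{6}))=\Sy_{6}$, whence $|\Aut^{\ast}(H_{6})|=|Z|\cdot|\Sy_{6}|=2160$ and $\Aut^{\ast}(H_{6})$ is an extension $3.\Sy_{6}$, with $\Sy_{6}$ centralising $Z$ on $A_{6}$ and inverting $Z$ on its odd elements (because $\ast$ inverts $N$). The $\mathbb{C}$-linear part $S_{0}:=\Aut^{\ast}(H_{6})\cap X_{0}$ has index $2$ (as $\tau_{2}\ast\notin X_{0}$), so $|S_{0}|=1080$, $\Phi(S_{0})$ is the unique index-$2$ subgroup $A_{6}$ of $\Sy_{6}$, and $S_{0}$ is a central extension $3.A_{6}$ (central since $\mathbb{C}$-linear maps commute with the scalars $Z$).

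It remains to show these extensions are non-split, and this is the step I expect to be the real obstacle. It suffices to treat $S_{0}$, since a complement $\Sy_{6}\le\Aut^{\ast}(H_{6})$ would meet $S_{0}$ in a complement $A_{6}$ to $Z$. I would identify $S_{0}$ with the nonsplit triple cover either by producing a $\mathbb{C}$-linear automorphism projecting to a $3$-cycle whose cube is the scalar $(\ww I,\ww I)$ — an element of order $9$, which $C_{3}\times A_{6}$ lacks — for instance by cubing the $\mathbb{C}$-linear element $(\tau_{2}\ast)\tau_{1}(\tau_{2}\ast)\tau_{1}^{-1}$, which projects to the $3$-cycle $(1,3,2)$; or, more conceptually, by checking that $\rho_{1}$ restricts to an irreducible $6$-dimensional representation of $S_{0}$ on which $Z$ acts by the nontrivial scalar $\ww$, which cannot factor through $A_{6}$ since $A_{6}$ has no $6$-dimensional irreducible representation. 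Either verification is a finite computation; establishing it identifies $S_{0}\cong 3.A_{6}$ and completes the identification $\Aut^{\ast}(H_{6})\cong 3.\Sy_{6}$ with $\mathbb{C}$-linear subgroup $3.A_{6}$.
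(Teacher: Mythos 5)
Your overall architecture is genuinely different from the paper's and much of it is sound, in places cleaner: the homomorphism $\Phi$ with $\ker\Phi=N\langle\ast\rangle$ plus an order count replaces the paper's analysis of the coset decomposition $X=K\cup(K\ast)$ and the quotient $K/N$; your rank-one observation (the condition $(D_1)_{ii}^{-1}(D_2)_{jj}=\overline{H_{ij}}$ would force the invertible matrix $H_6^{\ast}$ to have rank one) is a nice replacement for the paper's constant-first-column argument; and the reduction of non-splitness from $3.\Sy_6$ to $S_0$ is correct. The genuine gap is your first proposed certificate of non-splitness: it cannot work, for any choice of element. The group $3.A_6$ contains \emph{no} elements of order $9$; its Sylow $3$-subgroup is extraspecial of order $27$ and exponent $3$. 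Concretely, since $\ast$ commutes with $\tau_1$ and inverts $\tau_2$, your element is the commutator $(\tau_2\ast)\tau_1(\tau_2\ast)\tau_1^{-1}=\tau_2\tau_1\tau_2^{-1}\tau_1^{-1}$, whose first component is $[\wo,1,\wo,1,\ww,\ww]\,(1,2,6)$ in the paper's shorthand. The cube of a monomial matrix with order-$3$ permutation part is diagonal, with $i$-th entry the product of the diagonal entries over the orbit of $i$; here $\wo\cdot 1\cdot\ww=1$ on the $3$-cycle and $\wo^{3}=\ww^{3}=1$ at the fixed points, so the cube is the identity, not $(\ww I,\ww I)$. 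Moreover the cube of a lift of a $3$-cycle does not depend on the lift (replacing $g$ by $zg$ with $z$ central of order $3$ leaves $g^{3}$ unchanged), so no cleverer choice exists; indeed $C_3\times A_6$ and $3.A_6$ have exactly the same set of element orders, so \emph{any} order-based distinction is doomed from the start.

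Your fallback therefore has to carry the full weight, and it does work: if $S_0=Z\times A$ with $A\cong A_6$, then since $Z$ acts by scalars any $A$-invariant subspace is $S_0$-invariant, so irreducibility of $\rho_1|_{S_0}$ would give $A_6$ an irreducible character of degree $6$, impossible as its degrees are $1,5,5,8,8,9,10$. But you left irreducibility as an unverified ``finite computation'', and with alternative (a) dead it is now the crux rather than an optional variant; note also that $S_0\cap N=Z$, so there is no large diagonal normal subgroup inside $S_0$ to make the check soft via Clifford theory. The paper's certificate is simpler and worth knowing: it exhibits explicit generators $x,y$ of a Sylow $3$-subgroup of $\Aut^{\ast}(H_6)$ and computes $[x,y]=(\ww I,\ww I)$, placing the central subgroup inside the derived subgroup, whereas a split $C_3\times A_6$ has derived subgroup $A_6$ meeting the centre trivially. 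Finally, the proposition asserts isomorphism with \emph{the} nonsplit extension $3.\Sy_6$: the paper spends its closing paragraphs on uniqueness (cyclicity of the Schur multiplier of $A_6$ via a presentation, hence uniqueness of $3.A_6$, then an analysis of $\Aut(3.A_6)$ showing there is a unique trivial-centre group of shape $3.\Sy_6$), and your argument stops short of this, establishing only that $\Aut^{\ast}(H_6)$ is \emph{a} nonsplit extension of that shape.
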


\begin{proof}

It is easily verified by hand that $H_{6}^{\tau_{1}} = H_{6}$ while $H_{6}^{\tau_{2}}$ is the complex
conjugate $H_{6}^{\ast}$. Therefore both $\tau_1$ and the product $\tau_2\ast$ fix $H_{6}$.
We claim that $\Aut^{\ast}(H_{6}) = \langle \tau_{1}, \tau_{2}\ast\rangle$.

First, we show that the intersection $\Aut^{\ast}(H_{6}) \cap N$ has order $3$. To prove this,
suppose that $(D, E) \in N$, and that $D^{-1}H_{6}E = H_{6}$, or equivalently $DH_{6} = H_{6}E$. Since
the first column of $H_{6}$ is constant, $D$ must be a scalar matrix. So $D$ commutes with $H_{6}$, and
we have $DH_{6} = H_{6}D = H_{6}E$. Hence $D = E$, so $(D, E) = (\ww^{i} I, \ww^{i} I)$ for some $i$.
Since these elements do leave $H_6$ invariant, the claim is proved.

We next claim that there is no element $(D, E)$ of $N$ such that $DH^{\ast}_{6} = H_{6}E$; suppose to the contrary that such a $(D, E)$ exists. Precisely the same argument as before shows that $D$ must be scalar. This implies that $H^{\ast}_{6} = H_{6}ED^{-1}$,
but this equation has no solution in diagonal matrices: since the first row of $H_{6}^{\ast}$ is equal to
the first row of $H_{6}$, we would require $ED^{-1} = I_{6}$, from which we derive $H_{6} = H_{6}^{\ast}$, a contradiction.

Consider the subgroup $K := \langle \tau_{1}, \tau_{2}\ast, N\rangle$ of $X$. Since $X=\langle K, \ast\rangle$ and
$\ast \not\in K$, we have $|X:K|=2$ and $X = K \cup (K\,\ast)$. It follows, moreover, from the previous arguments
that no element of $K$ sends $H_{6}$ to $H_{6}^{\ast}$, and hence no element of the right coset $K\ast$ can fix $H_{6}$.
Therefore, $\Aut^{\ast}(H_{6}) \subseteq K$, and from the first paragraph of the proof we also have $\Aut^{\ast}(H_{6})N = K$.
The quotient $\Aut^{\ast}(H_{6})/(\Aut^{\ast}(H_{6}) \cap N)$ is isomorphic to $K/N$,
an index $2$ subgroup of $X/N\cong \Sy_{6}.2$. In particular $K/N$ contains $A_6$ as a normal subgroup of index 2.
Since the element $N\tau_{2}\ast$ does not lie in $A_6$ and does not centralise $A_6$ it follows that $K/N\cong \Sy_6$.

We have shown that $\Aut^{\ast}(H_{6})$ has a normal subgroup of order $3$ with quotient isomorphic to $\Sy_{6}$.
The elements $(\tau_{2}\ast)^{\tau_{1}^{i}}$ for $0\leq i \leq 4$ project onto a set of Coxeter generators for $\Sy_{6}$.
With these generators, it is straightforward to construct a Sylow $3$-subgroup of $\Aut^{\ast}(H_{6})$. One such subgroup is generated by
\begin{eqnarray*}
 x &:=& \left([\wo, 1, \ww, \ww, 1, \wo] (1,2,3), \quad [\ww, 1, \ww, 1, \wo, \wo] (1, 4, 6)(2, 3, 5) \right)\\
 y &:=&  \left([\ww, \wo, 1, 1, \wo, \ww] (4,5,6),  \quad [\ww, \ww, \ww, \ww, \ww, \ww] (1, 4, 6)(2, 5, 3)\right).
\end{eqnarray*}

A computation shows that $\left[ x,y\right] = \left([\ww, \ww, \ww, \ww, \ww, \ww],  [\ww, \ww, \ww, \ww, \ww, \ww]\right)$. This shows that the commutator subgroup contains the normal subgroup of order $3$, hence the extension is non-split. Elements of $\Aut^{\ast}(H)$ which map onto odd permutations act on $\left[ x,y\right] $ by inversion. So the centraliser of this normal subgroup is of index $2$ in $\Aut^{\ast}(H)$: this is necessarily a non-split central extension $3.A_{6}$.

A perfect group $S$ has a largest non-split central extension $\hat{S}$ which is unique up to isomorphism. The center of $\hat{S}$ is the Schur multiplier of $S$, and every non-split central extension of $S$ is a quotient of $\hat{S}$. The number of generators of the Schur multiplier is bounded by $g - r$ where $g$ is the number of generators in a presentation of $S$ and $r$ is the number of relations. We refer the reader to Wiegold's survey on the Schur multiplier for proofs of all these results \cite{Wiegold}. Since $A_{6}$ is shown in \cite{nicepres} to have the presentation
\[ \langle a, b \mid a^{4}, b^{5}, abab^{-1}abab^{-1}a^{-1}b^{-1} \rangle\,, \]
it follows that the Schur multiplier of $A_{6}$ is cyclic. Hence the non-split extension $3.A_{6}$ is unique up to isomorphism.

Now, since $\Aut^{\ast}(H)$ splits over $3.A_{6}$, we have that $3.A_{6} < \Aut^{\ast}(H) < \Aut(3.A_{6})$.
Suppose that $\xi \in \Aut(3.A_{6})$ such that the image of $\xi$ in $\Aut(A_{6})$ is the trivial automorphism.
Let $\sigma \in 3.A_{6}$ be an element of order $15$, projecting onto a $5$-cycle in $A_{6}$. Then $\sigma^5$
generates the central subgroup of order $3$. Each coset of $\langle \sigma^{5}\rangle$ contains a unique element
of order $5$, which is fixed by hypothesis. So either $\langle \sigma \rangle$ is fixed element-wise, or $\xi = \ast$.
Moreover, any two subgroups of order $15$ intersect in $\langle \sigma^{5}\rangle$, so the action of $\xi$ is
identical on all $5$-cycles. Since the $5$-cycles generate $A_{6}$, the action of $\xi$ is completely determined.

So each choice of actions on $3$ and on $A_{6}$ determines at most one isomorphism class of groups.
It follows that $\Aut^{\ast}(H)$ is uniquely described as the group of shape $3.\Sy_{6}$ with trivial center.


The projection of $\rho_{1}(\Aut^{\ast}(H) \cap X_{0})$ is clearly a faithful linear representation of $3.A_{6}$
over the complex numbers, completing the proof.
\end{proof}

In fact, $3.A_{6}$ is the largest subgroup of $\Aut^{\ast}(H_{6})$ admitting a faithful $6$-dimensional representation over $\mathbb{C}$. So this is $\Aut(H_{6})$. A useful way to understand the actions of $X$ and of $\Aut^{\ast}(H_{6})$ is via a permutation action on $18$ points, which we now describe.
Let $P_1=\tau_1^{\rho_1}$ and $P_2=\tau_2^{\rho_1}$, and define the following $18\times 6$ matrices:
\[\begin{array}{ccc}
M_1=\left(\begin{array}{r}
 H\\
\omega H\\
\omega^2 H
\end{array}\right)&\quad\mbox{and}\quad &M_2=\left(\begin{array}{r}
 H^*\\
\omega H^*\\
\omega^2 H^*
\end{array}\right).\\
\end{array}\]
For $1\leq i\leq 18$, let $\Row_i(M_j)$ denote the $i^{th}$ row of $M_j$ (where $j=1,2$).
Let $P_1$ act on the rows of $M_1$, and similarly the rows of $M_2$, as follows:
\[\begin{array}{ccc}
P_1\cdot M_1&=&\left(\begin{array}{r}
 P_1H\\
\omega P_1H\\
\omega^2 P_1H
\end{array}\right)
\end{array}\]
By letting $P_2$ act on the rows of $M_1$ and $M_2$ in a similar manner, we find that $P_1$ and $P_2$ act in the same way on the rows of $M_1$ and the rows of $M_2$, and hence act on the set $\Omega(18):= \{ \{ \Row_i(M_1), \Row_i(M_2)\} | i=1,\dots,18\}$.  Also, letting * act as complex conjugation on $M_1$ and $M_2$, we see that $\ast$ also induces a permutation of $\Omega(18)$. Thus $\tau_1$, $\tau_2$ and $\ast$ all induce permutations of $\Omega(18)$ and, identifying $\{ \Row_i(M_1), \Row_i(M_2)\}$ with $i$, for each $i$, we get a permutation representation of $X$ on 18 points with the following generating permutations:
\begin{eqnarray*}\tau_1 &=& (2,3,4,5,6)(8,9,10,11,12)(14,15,16,17,18), \\
\tau_2 &=& (1,2)(3,15,9)(4,10,16)(5,11,17)(6,18,12)(7,8)(13,14),\\
  \ast &=& (7, 13)(8, 14)(9, 15)(10, 16)(11, 17)(12, 18).
\end{eqnarray*}
The kernel of $X$ in this action is the subgroup of $N$ of order $3^5$ consisting of pairs with trivial first component.
The restriction to $\Aut^{\ast}(H_{6})$ is faithful, however.
One could construct a faithful action of $X$ by taking the permutation action induced by its action on the rows of $H_{6}$ together with the induced action on columns.

\begin{remark}
The matrix $H_{6}$ and the group $3.A_6$ can be realised over any field $k$ for which $k^{\times}$ has a subgroup of order $3$. In the case that $k$ is the finite field of order $4$, the rows of $H_{6}$ span the \textit{Hexacode}, introduced by Conway as part of a construction for the group $M_{12}$. It is discussed in detail in Section 11.2 of \cite{ConwaySloane}. In particular, this code is the extended quadratic residue code with parameters $(6, 3, 4)$. Uniqueness can easily be verified by hand: observe that the punctured code is the Hamming $(5, 3, 3)$ code, which is unique, and that any pair of one-bit extensions which increase the minimum distance are isomorphic. The $6$-dimensional $\C$-representation of $3\cdot A_{6}$ has been previously described in the literature, normally via its action on a set of vectors in $\mathbb{C}^{6}$ derived from the hexacode. In particular, Wilson gives the action of $3\cdot A_{6}$ on certain vectors of weight $4$ in Section 2.7.4 of \cite{Wilson}.
\end{remark}

\section{The outer automorphism of $\Sy_{6}$}

Finally we construct the outer automorphism of $\Sy_{6}$ over the split-quaternions. Recall that the split-quaternions are a $4$-dimensional $\mathbb{R}$-algebra with basis $[1, i, \beta, \beta i]$ where $[1,i]$ generates the usual algebra of complex numbers and $\beta^{2} = 1$, $i^{\beta} = -i$. We denote the split quaternions by $\mathbb{B}$. They admit an $\mathbb{R}$-linear representation generated by
\[ i \mapsto \left(\begin{array}{rr} 0 & -1 \\ 1 & 0 \end{array} \right), \hspace{1cm} \beta \mapsto \left(\begin{array}{rr} 0 & 1 \\ 1 & 0 \end{array} \right).\]
Observe that $\Aut^{\ast}(H_{6})$ admits a $\mathbb{B}$-linear representation if and only if $\ast$ does, and that the latter is realised by $(\beta I_{6}, \beta I_{6})$.

Since $H_{6}$ is invertible over $\mathbb{C}$, it is invertible over $\mathbb{B}$. Now, rearranging the matrix equation $H_{6}^{\tau_{2}\ast} = H_{6}$, and using the same notation as before for monomial matrices, we obtain that
\[ H_{6} \left[\left[\beta,\beta,\beta\wo,\beta\ww,\beta\ww,\beta\wo \right](1,2)(3,6)(4,5)\right] H_{6}^{-1} = \left[\left[\beta,\beta,\beta\ww,\beta\wo,\beta\wo,\beta\ww \right](1,2)\right].\]
Note that $(\beta\ww)^{2} = (\beta\wo)^{2} = 1$ so that the matrix on the right hand side of the above equation is an involution.

As was the case over the complex numbers, $H_{6}$ intertwines the projections $\rho_{1}$ and $\rho_{2}$. We observe that for any $g \in \Aut^{\ast}(H)$, we have that $g^{\rho_{1}} = H_{6}g^{\rho_{2}}H_{6}^{-1}$. But, as illustrated above, $\tau_{2}^{\rho_{1}\pi}$ is a $2$-cycle, while the projection $\tau_{2}^{\rho_{2}\pi}$ is a product of $3$ disjoint $2$-cycles. We conclude that the representations $\rho_{1}\pi$ and $\rho_{2}\pi$ of $\Sy_{6}$ cannot be conjugate. Thus whereas the permutation representations of $\Sy_{6}$ on $6$ points are not equivalent, and the monomial representations of $3.A_6$ are not equivalent, we have constructed two explicit $\mathbb{B}$-linear representations of $3.\Sy_6$ which are equivalent under conjugation by $H_{6}$. Moreover, although the representation is not defined over $\mathbb{C}$, the intertwiner $H_{6}$ is.

\begin{theorem}
There exists an irreducible $6$-dimensional monomial representation of $3.\Sy_{6}$ over the split-quaternions. Two conjugate representations of $3.\Sy_{6}$ intertwined by the complex Hadamard matrix $H_{6}$ give an explicit construction for the outer automorphism of $\Sy_{6}$.
\end{theorem}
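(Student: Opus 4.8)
The plan is to extract both assertions from the structure $\Aut^{\ast}(H_{6})\cong 3.\Sy_{6}$ of Proposition 2, together with the explicit split-quaternion data assembled above. For the existence of the representation I would start from the faithful complex monomial representation $\rho_{1}$ carried by $\Aut^{\ast}(H_{6})\cap X_{0}\cong 3.A_{6}$, whose nonzero entries are third roots of unity and so already lie in $\mathbb{C}\subset\mathbb{B}$. To promote this to all of $3.\Sy_{6}$ I would use that $\ast$ is realised $\mathbb{B}$-linearly by $(\beta I_{6},\beta I_{6})$: since $i^{\beta}=-i$, conjugation by $\beta$ implements complex conjugation, so replacing the antilinear generator $\ast$ by $\beta I_{6}$ converts the semilinear action of $\Aut^{\ast}(H_{6})$ into a genuine $\mathbb{B}$-linear one. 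This yields a homomorphism $3.\Sy_{6}\to\GL_{6}(\mathbb{B})$ with monomial image, the nontrivial coset being represented by split-quaternion monomial matrices such as the one displayed above for $\tau_{2}\ast$. Well-definedness is routine: the defining relations already hold for the semilinear transformations, and conjugation by $\beta$ reproduces the action of $\ast$ on the diagonal entries exactly.

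For irreducibility I would restrict to the index-two subgroup $3.A_{6}$. Any $\mathbb{B}[3.\Sy_{6}]$-submodule is in particular a $\mathbb{C}$-subspace stable under $3.A_{6}$ and under $\beta$. If $\rho_{1}|_{3.A_{6}}$ is already irreducible over $\mathbb{C}$ there is nothing to check; and if instead it splits as a sum of the two conjugate three-dimensional faithful representations of $3.A_{6}$, then $\beta$ (implementing complex conjugation) interchanges these constituents, so again no proper nonzero subspace is stable under both $3.A_{6}$ and $\beta$. Either way the representation is irreducible over $\mathbb{B}$.

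For the outer automorphism the key inputs are the intertwining identity $g^{\rho_{1}}=H_{6}\,g^{\rho_{2}}\,H_{6}^{-1}$ for all $g\in\Aut^{\ast}(H_{6})$, and the fact that $H_{6}$ is not monomial. I would form the two homomorphisms $\pi_{i}:=\rho_{i}\pi:\Aut^{\ast}(H_{6})\to\Sy_{6}$ onto the symmetric group on the six coordinates. Each has kernel exactly the normal subgroup $Z_{3}$ of order three (the scalar matrices $\omega^{j}I_{6}$, whose permutation part is trivial) and is surjective, hence descends to an isomorphism $\bar{\pi}_{i}:\Aut^{\ast}(H_{6})/Z_{3}\cong\Sy_{6}\to\Sy_{6}$. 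The composite $\alpha:=\bar{\pi}_{1}\circ\bar{\pi}_{2}^{-1}$ is then an automorphism of $\Sy_{6}$. To see that it is outer I would evaluate on $g=\tau_{2}\ast\in\Aut^{\ast}(H_{6})$: since $\ast$ projects to the identity permutation, $\pi_{i}(\tau_{2}\ast)=\tau_{2}^{\rho_{i}\pi}$, which is the transposition $(1,2)$ for $i=1$ but the fixed-point-free involution $(1,2)(3,6)(4,5)$ for $i=2$, exactly as read off from the two sides of the displayed conjugation identity. These involutions have different cycle types, so $\alpha$ does not preserve cycle type and is therefore not inner; as $\mathrm{Out}(\Sy_{6})$ has order two, $\alpha$ is the outer automorphism.

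The main obstacle is conceptual rather than computational. One must keep firmly separate the two notions of equivalence in play: $\rho_{1}$ and $\rho_{2}$ are linearly equivalent as split-quaternion representations, via the explicit intertwiner $H_{6}$ (which has complex entries), yet they are inequivalent as monomial representations, precisely because $H_{6}$ is not monomial and conjugation by it does not commute with the permutation-part map $\pi$. It is this single mismatch that is repackaged as the outer automorphism, and the care needed is in checking that each $\pi_{i}$ really descends to an isomorphism of $\Sy_{6}$ and that the one element $\tau_{2}\ast$ already witnesses the change of cycle type.
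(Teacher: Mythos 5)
Your proposal is correct and follows essentially the same route as the paper, whose proof is the discussion preceding the theorem statement: realise $\ast$ by $(\beta I_{6},\beta I_{6})$ to promote the semilinear group $\Aut^{\ast}(H_{6})$ to a $\mathbb{B}$-linear monomial representation, use the intertwining identity $g^{\rho_{1}} = H_{6}\,g^{\rho_{2}}\,H_{6}^{-1}$, and conclude outerness from the cycle-type mismatch between $\tau_{2}^{\rho_{1}\pi} = (1,2)$ and $\tau_{2}^{\rho_{2}\pi} = (1,2)(3,6)(4,5)$. You go slightly beyond the paper by making explicit that each $\rho_{i}\pi$ descends to an isomorphism onto $\Sy_{6}$ (kernel the scalar subgroup of order $3$) and by sketching irreducibility, which the paper asserts without argument; the only imprecision is in your fallback branch, since complex conjugation inverts the central character $\omega$ and hence does not directly interchange two constituents carrying the same central character, but the first branch ($\mathbb{C}$-irreducibility of the restriction to $3.A_{6}$, whose non-real central character then gives irreducibility over $\mathbb{B}$) is the one that actually obtains.
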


\subsection*{Acknowledgements}
Work on this paper was begun while the second author was visiting the Centre for the Mathematics of Symmetry and Computation at the University of Western Australia in March 2012. The hospitality of the CMSC is gratefully acknowledged,  and in particular support from the ARC Federation Fellowship grant FF0776186 of the third author, which also supported the first author.

The second author acknowledges the support of the Australian Research Council via grant DP120103067, and Monash University where much of this work was completed. This research was partially supported by the Academy of Finland (grants \#276031, \#282938, \#283262 and \#283437). The support from the European Science Foundation under the COST Action IC1104 is also gratefully acknowledged.

\bibliographystyle{abbrv}
\flushleft{
\bibliography{NewBiblio}
}

\end{document}